\newtheorem{theorem}{Theorem}[section]
\theoremstyle{definition}
\newtheorem{definition}[theorem]{Definition}
\theoremstyle{remark}
\newtheorem{example}[theorem]{Example}
\numberwithin{equation}{section}
\begin{document}
	
	\title {Proof of admissible weights}
	
	\vskip 16mm
	\author{
	}
	
	\date{\today}
	\maketitle
	\begin{center}
	    Hao Yu  \quad School of Mathematics and Statistics, Chongqing University, Chongqing
	\end{center}
	\begin{abstract}
		
		Admissable weight is an important tool for studying spectral invariance in operator algebra. Common  admissable weights include polynomial weights and sub exponential weights. This article mainly provides a proof that polynomial weights are permissible weights.
	\end{abstract}

	\section{Introduction } 
	The Uniform Roe algebra and Roe algebra\cite{higson1995coarse} originated from the index theory on non compact manifolds, reflecting the coarse structure of metric spaces These algebras play an important role in using the $C ^ * $algebra method to solve some geometric and topological problems (such as Novikov conjecture \cite{yu1995baum})in differential topology  Therefore, studying the spectral invariant subalgebras of consistent Roe algebras is of great significance There are many methods for studying spectral invariance. The traditional method is to start from the definition and verify spectral equality, but Hulanicki proposed a new method that no longer studies spectral equality but rather investigates whether spectral radii are equivalent. In 2007, Sun \cite{sun2007wiener} provided a definition of  admissable weights and briefly proved whether some common weight functions are admissable weights
This article will provide a detailed proof in the following content that polynomial weights are admissable weights.
	
	\section{weight function} 
	
\begin{definition}\cite{fendler2008convolution}
    If function $ w: G \longrightarrow[1, \infty) $  \quad and
		\[w(xy) \leq w(x) w(y),  \quad  \forall x,y \in G ;\]
		\[w\left(x^{-1}\right)=w(x),\quad   \forall x \in G;\]
		\[w(e)=1.\]
    
\end{definition}   

\begin{example}
    Here are some examples of classic weight functions:
	
	(i) If \quad $w_s, s \geq 0$,
	\[w_ s(x, y)=\left(1+\rho_{l}(x, y)\right)^{s},\]
	 is a weight on $G \ times G $, and this type of weight is called admissable weight
	
	(ii) If $f_{\alpha, \beta}, \alpha \in(0, \infty) $ and  $\beta \in(0,1)$ 
	\[f_{\alpha, \beta}(x, y)=\exp \left(\alpha \rho_{l}(x, y)^{\beta}\right),\]
	is a weight on $G \ times G $,this type of right is called a sub index right
\end{example}

	\begin{definition}\cite{2003Spectral}
	    	Let $G $ be a countable group and $l $ be an appropriate length function on $G $ For $\tau \in \left [1,\infty  \right )$, let $\left | B(x, \tau)\right |$ represents the number of elements in the ball $B(x,\tau) =\left \{ y\in G: \rho_{l}(x,y)<\tau \right \} $ ,We call it:
	
	The group G is polynomial growing. If C exists and d>0, then
	\begin{equation}\label{ud}
		\left | B(x,\tau)\right |\le C\tau ^{d} \quad\text{for all} ~ y\in G~\text{and}~ \tau \ge 1,
	\end{equation}

	\end{definition}
\section{admissable weight}	
\begin{definition}
		Let $1\le p,r\le \infty$. We say that a weight $\omega $ is $\left (p,r  \right ) $-{\it admissible} if there exist another weight $v$ and two positive constants $D\in \left ( 0,\infty  \right ) $ and
		 $ \theta \in \left ( 0,1 \right ) $ such that
		\begin{equation}\label{w1}
			w\left ( x,y \right ) \le  D\left ( w\left ( x,z \right ) v\left ( z,y \right ) +v\left ( x,z \right ) w\left ( z,y \right )  \right )\quad\text{for~all}~ x,y,z\in G,
		\end{equation}
		\begin{equation}\label{w22}
			\sup_{x\in G}\left \| (vw^{-1})\left ( x,\cdot \right ) \right \|_{p^{_{'}}} +\sup_{y\in G}\left \|( vw^{-1})\left ( \cdot,y \right ) \right \| _{p^{_{'}}}\le D,
		\end{equation}
		and
		\begin{equation}\label{w2}
			\inf _{\tau >0}a_{r' }\left ( \tau  \right ) +b_{p' }\left ( \tau  \right )t                                                                                                                                                                                                                                                                                                                                                                                                                                                                                                                                                                                                                                                                                                                                                                                                                                                                                                                                                                                                                                                                                                                                                                                                                                                                                                                                                                                                                                                                                                                                                                                                                                                                                                                                                                                                                                                                                                                                                                                                                                                                                                                                                                                                                                                                                                                                                                                                                                                                                                                                                                                                                                                                                                                                                                                                                                                                                                                                                                                                                                                                                                                                                                                                                                                                                                                                                                                                                                                                                                                                                                                                                                                                                                                                                                                                                                                                                                                                                                                                                                                                                                                                                                                                                                                                                                                                                                                                                                                                                                                                                                                                                                                                                                                                                                                                                                                                                                                                                                                                                                                                                                                                                                                                                                                                                                                                                                                                                                                                                                                                                                                                                                                                                                                                                                                                                                                                                                                                                                                                                                                                                                                                                                                                                                                                                                                                                                                    
		\le Dt^\theta \quad \text{for~all}~ t\ge 1,
		\end{equation} 
		where ${p}' =p/\left(p-1 \right)$, $r' =r/\left ( r-1 \right )$,
		\begin{equation}\label{w3}
			a_{r'}(\tau)=\sup_{x\in G}\left \| v\left ( x,\cdot \right )\chi _{B\left ( x,\tau  \right )\left ( \cdot  \right )  }  \right \|_{r^{_{'}}} +\sup_{y\in G}\left \| v\left ( \cdot,y \right )\chi _{B\left (y, \tau  \right )\left ( \cdot  \right )  } \right \|_{r^{_{'}}},
		\end{equation}
		\begin{equation}\label{w end}
			b_{p' }(\tau)=\sup_{x\in G}\left \| (vw^{-1})\left ( x,\cdot \right )\chi _{X\setminus B\left ( x,\tau  \right )\left ( \cdot  \right )  }  \right \|_{p^{_{'}}} +\sup_{y\in G}\left \|( vw^{-1})\left ( \cdot,y \right )\chi _{X\setminus B\left (y, \tau  \right )\left ( \cdot  \right )  } \right \|_{p^{_{'}}},
		\end{equation}
		$\chi _{E}$ is the characteristic function on the set $E$, and $\left \| \cdot  \right \| _{p}$ is the norm on $\ell^{p}$,  the space of all $\mathnormal{p}$-summable functions on $G$.
		
		The technical assumption on the weight $w$, $(p, r)$-admissibility, plays very important role in our  results,.
	\end{definition}

\section{Proof of admissable Weights for Polynomial Weights}

\begin{theorem}
		\label{poly}
		Let $G$ be a countable discrete group with a  metric $\rho_{l } $ and suppose $G$ has polynomial growth.  Define $w(x,y)=(1+\rho_{l}(x,y))^s$ as a left-invariant weight on $G\times G $. Then there exists another weight $v$ such that (\ref{w1})-(\ref{w2}), indicating that $w$ is a $\left (p,r \right)$-admissible weight.
	\end{theorem}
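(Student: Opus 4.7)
The plan is to pick an auxiliary polynomial weight $v$ of strictly smaller exponent than $w$, so that the quotient $vw^{-1}$ decays fast enough to lie in $\ell^{p'}$ in each variable. Specifically, I will set $v(x,y)=(1+\rho_{l}(x,y))^{s_{0}}$ for some $s_{0}\in [0,\,s-d/p')$; this range is nonempty precisely when $s>d/p'$, which is in any case necessary for (\ref{w22}) (since $v\ge 1$ forces $\|vw^{-1}(x,\cdot)\|_{p'}\ge\|w^{-1}(x,\cdot)\|_{p'}$, and the latter requires $sp'>d$ for finiteness on a polynomial-growth group). That $v$ is itself a weight follows by the same triangle-inequality argument as for $w$.

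For condition (\ref{w1}), I use the elementary bound $1+\rho_{l}(x,y)\le(1+\rho_{l}(x,z))+(1+\rho_{l}(z,y))\le 2\max\{1+\rho_{l}(x,z),\,1+\rho_{l}(z,y)\}$, which upon raising to the $s$-th power and separating cases gives $w(x,y)\le 2^{s}(w(x,z)+w(z,y))$. Since $v\ge 1$, multiplying these terms by $v(z,y)$ and $v(x,z)$ respectively recovers (\ref{w1}) with $D=2^{s}$.

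Conditions (\ref{w22}) and (\ref{w2}) both reduce to standard tail estimates for polynomial weights under polynomial growth. By left-invariance of $\rho_{l}$ and the counting measure, each supremum over $x$ or $y$ collapses to a single sum of the shape $\sum_{y\in G}(1+\rho_{l}(e,y))^{-\beta}$ (possibly truncated to a ball or its complement), which a layer-cake (integration-by-parts) computation bounds by $\int_{0}^{\infty}(1+t)^{-\beta-1}|B(e,t)|\,dt\lesssim\int_{0}^{\infty}(1+t)^{d-\beta-1}\,dt$, finite exactly when $\beta>d$. Applied with $\beta=(s-s_{0})p'$ this yields (\ref{w22}). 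The same layer-cake applied to the tail outside $B(x,\tau)$ yields $b_{p'}(\tau)\lesssim \tau^{-\gamma}$ with $\gamma:=s-s_{0}-d/p'>0$, while on a ball the crude bound $v(x,y)\le(1+\tau)^{s_{0}}$ combined with $|B(x,\tau)|\lesssim \tau^{d}$ gives $a_{r'}(\tau)\lesssim \tau^{\alpha}$ with $\alpha:=s_{0}+d/r'$.

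Finally, to verify (\ref{w2}) I will balance by minimising $C_{1}\tau^{\alpha}+C_{2}\,t\,\tau^{-\gamma}$ over $\tau>0$; elementary calculus (or weighted AM--GM) gives the minimiser $\tau\sim t^{1/(\alpha+\gamma)}$ and overall bound $\lesssim t^{\alpha/(\alpha+\gamma)}$, so (\ref{w2}) holds with $\theta:=\alpha/(\alpha+\gamma)\in(0,1)$, strict inequality coming from $\gamma>0$. The main obstacle I expect is the bookkeeping in the layer-cake and optimisation steps --- tracking every exponent, checking that $\theta<1$ uniformly in the admissible range of $s_{0}$, and handling the endpoint cases $p\in\{1,\infty\}$ or $r\in\{1,\infty\}$, where an $\ell^{p'}$ norm degenerates into a supremum and the tail estimates must be re-derived separately.
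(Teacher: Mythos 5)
Your proposal is correct and follows essentially the same route as the paper: the paper simply takes the special case $v\equiv 1$ (your $s_{0}=0$), derives the same bounds $a_{r'}(\tau)\lesssim\tau^{d/r'}$ and $b_{p'}(\tau)\lesssim\tau^{-(s-d/p')}$ under the same hypothesis $s>d/p'$ (via a dyadic decomposition of the tail sum rather than your layer-cake formula, an immaterial difference), and then optimizes over $\tau=t^{\alpha}$ exactly as you propose. One point in your favour: your constant $D=2^{s}$ in (\ref{w1}) is the correct one, whereas the paper's claim that $(1+\rho_{l}(x,z)+\rho_{l}(z,y))^{s}\le(1+\rho_{l}(x,z))^{s}+(1+\rho_{l}(z,y))^{s}$ with $D=1$ fails for $s>1$ (take both distances equal to $1$ and $s=2$: then $9\not\le 8$).
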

	\begin{proof}
				We define the minimal rate of polynomial growth $d(G,\rho_l)$  as follows
		\begin{align} \label{defd}
			d(G,\rho_l)=\inf\left \{ d:(\ref{ud})\text{ holds for some positive } C \right \}.
		\end{align}
	 For $s\ge 0$, the weight $w$ and $v$ are defined as follows
		\begin{equation*}
			w(x,y)=(1+\rho_{l}(x,y))^s\quad \text{for~all}~ x,y\in G,
		\end{equation*}
			\begin{equation*}
			v(x,y)=(1+\rho_{l}(x,y))^0=1\quad \text{for~all}~ x,y\in G.
		\end{equation*}
		Then, we have
		\begin{align*}
			w(x,y)=(1+\rho_{l}(x,y))^s&\le (1+\rho_{l}(x,z)+\rho_{l}(z,y))^s\\
			&\le(1+\rho_{l}(x,z))^s +(1+\rho_{l}(z,y))^s\\
			&=w(x,z)\cdot 1+w(z,y)\cdot1\\
			&=w(x,z)v(z,y)+v(x,z)w(z,y),
		\end{align*}
		for all $x,y,z\in G$ with $D=1$. Thus, we have the weights $w$ and $v$ satisfy (\ref{w1}).

	Let $s>\frac{1}{p'}d(G,\rho_l)$. Using (\ref{ud}) and (\ref{defd}), 
    
    we  obtain the following inequality for $\sum_{\rho_l(x,y)\ge \tau} w^{-p'}(x,y)$:
	
	\begin{align*}
			\sum _{\rho_l(x,y)\ge \tau }w^{-p'}(x,y)
			&=\sum_{j=1}^{\infty } \sum _{2^{j-1}\tau \le\rho_l (x,y)< 2^{j}\tau}w^{-p'}(x,y) \notag\\ 
			&\le \sum_{j=1}^{\infty }(1+2^{j-1}\tau )^{-p's}\left | B(x,2^j\tau) \right |\le C \sum_{j=1}^{\infty }(1+2^{j-1}\tau )^{-p's}\cdot \left ( 2^{j}\tau  \right )^{d(G,\rho_l)} \notag \\ 
			&\le C 2^{p's}\cdot \tau^{-p's+d(G,\rho_l)} \sum_{j=1}^{\infty }2^{j(d(G,\rho_l)-p's)} 
			 \notag \\  
			&\le C_{\epsilon }\tau ^{-(p's-d(G,\rho_l))}<\infty\quad \text{for all}~ x\in G~ \text{and} ~\tau \ge 1,
		\end{align*}
		 where $C$ is a positive constant independent of $x\in G$.
		Then, we get
		\begin{align*}
				\sup_{x\in G}\left \| vw^{-1}(x,\cdot ) \right\| _{p'}&=\sup_{x\in G}\Big ( \sum _{y\in G}\left ( w^{-p'}(x,y) \right )  \Big)^\frac{1}{p'} \\
			&= \sup_{x\in G}\Big(\sum _{0<\rho_{l}(x,y)< \tau }w^{-p'}(x,y)\Big)^\frac{1}{p'}  +\sup_{x\in G}\Big(\sum _{\rho_{l}(x,y))\ge \tau }w^{-2}(x,y)\Big)^\frac{1}{p'}\\
			&\le C+\sup_{x\in G}\Big(\sum _{\rho_{l}(x,y))\ge \tau }w^{-p'}(x,y)\Big)^\frac{1}{p'}<\infty.
		\end{align*}
		Similarly, we obtain
		$$	\sup_{x\in G}\left \| vw^{-1}(\cdot,y ) \right\| _{p'}<\infty.$$
		Therefore, we have
		\[\sup_{x\in G}\left \|v w^{-1}(x,\cdot ) \right \| _{p'}+\sup_{y\in G}\left \| vw^{-1}(\cdot,y ) \right \| _{p'}<\infty,\]
		which implies the weights $w$ and $v$ satisfy (\ref{w22}).
		
		We claim that
		\begin{align*}
			\inf_{\tau\ge 1}\left( a_{r'}(\tau ) +b_{p'}(\tau ) \cdot t\right)
			&\le Ct^{\theta},
		\end{align*}
		for all $t\ge1$ and $\theta=\frac{d(G,\rho_l) }{d(G,\rho_l)+(s-\frac{d(G,\rho_l)}{2})(\frac{r}{r-1} ) } <1 $.
		
			Firstly, we have
		\begin{align*}
			\sup_{x\in G}\left \| v\left ( x,\cdot \right )\chi _{B\left ( x,\tau  \right )\left ( \cdot  \right )  }  \right \|_{r'}&=\sup_{x\in G}\Big[ \sum_{y\in G}  ( 	\chi _{B\left ( x,\tau  \right )\left ( y \right ) } )^{r'} \Big]^\frac{1}{r'}\\
			&\le\sup_{x\in G}(\left| B(x,\tau)\right| )^{\frac{1}{r'}}\\
			&\le C_{1} \tau ^{\frac{d(G,\rho_l)}{r'}}.
		\end{align*}
		Similarly, we have
		\begin{align*}
\sup_{x\in G}\left \| (vw^{-1})\left ( x,\cdot \right )\chi _{X\setminus B\left ( x,\tau  \right )\left ( \cdot  \right )  }  \right \|_{p'}& = \sup_{x\in G}(\sum _{y\in G,\rho_l(x,y)\ge \tau }(1+\rho_{l}(x,y))^{-sp'})^{\frac{1}{p'}} \\
&\le\sup_{x\in G} (\sum _{y\in G,\rho_l(x,y)\ge \tau }w^{-p'}(x,y))^{\frac{1}{p'}}\\
& \le C_{1}  \tau ^{-(s-\frac{d(G,\rho_l)}{p'}    )}.
\end{align*}
		Then, we get
		\begin{align*}
			\inf_{\tau\ge 1}\left( a_{r'}(\tau ) +b_{p'}(\tau ) \cdot t\right)
			&\le 2\inf_{\tau\ge 1} \left ( C_{1}\tau^\frac{d(G,\rho_l)}{r'} +C_{1}\tau ^{-(s-\frac{d(G,\rho_l)}{p' }) }\cdot t \right )\\
            &=\inf_{\tau\ge 1} \left ( C\tau^\frac{d(G,\rho_l)}{r'} +C\tau ^{-(s-\frac{d(G,\rho_l)}{2 }) }\cdot t \right )
		\end{align*}
		where ${p}' =p/\left(p-1 \right)$, $r' =r/\left ( r-1 \right )$,$1\le r \le \infty$.. 
		
		 Indeed, let $\tau=t^{\alpha},\alpha\ge 0$, it is enough to prove
	\begin{align}
		t^{\frac{\alpha d(G,\rho_l)}{r'} }&\le Ct^{\frac{d(G,\rho_l) }{d(G,\rho_l)+(s-\frac{d(G,\rho_l)}{p'})(\frac{r}{r-1}) } }\label{e3.22}\\ 
		t^{-\alpha (s-\frac{d(G,\rho_l)}{p'})} \cdot t &\le Ct^{\frac{d(G,\rho_l) }{d(G,\rho_l)+(s-\frac{d(G,\rho_l)}{p'})(\frac{r}{r-1}) }}\label{e3.23}.
	\end{align}

	By setting$\alpha=\frac{1}{(1-\frac{1}{r})d(G,\rho_l)+(s-\frac{d(G,\rho_l)}{p'}) }$, we obtain inequalities (\ref{e3.22}) and (\ref{e3.23}).
	
Hence, we have
			\begin{align*}
			\inf_{\tau\ge 1}\left( a_{r'}(\tau ) +b_{p'}(\tau ) \cdot t\right)
			&\le\inf_{\tau\ge 1} \left ( C\tau^\frac{d(G,\rho_l)}{r'} +C\tau ^{-(s-\frac{d(G,\rho_l)}{p' }) }\cdot t \right ) \\
			&\le Ct^{\frac{d(G,\rho_l) }{d(G,\rho_l)+(s-\frac{d(G,\rho_l)}{p'})(\frac{r}{r-1}) }}=Ct^{\theta},
		\end{align*}
		which means the weights $w$ and $v$ satisfy (\ref{w2}).	
\end{proof}

\bibliographystyle{abbrv}
	\bibliography{main}

\end{document}